\newcommand \datum {October 29, 2023}
\numberwithin{equation}{section}
\theoremstyle{plain}
 \newtheorem{theorem}{Theorem}[section]
 \newtheorem{proposition}[theorem]{Proposition}
 \newtheorem{corollary}[theorem]{Corollary}
\theoremstyle{definition}
 \newtheorem{remark}[theorem]{Remark}
\theoremstyle{remark}
\newcommand \ideal [1]{\mathord\downarrow #1}
\newcommand \filter[1]{\mathord\uparrow #1}
\newcommand \clnp {\textbf{NP}}
\newcommand \clp {\textbf{P}}
\newcommand \Alg   {\mathcal A}
\newcommand \Blg   {\mathcal B}
\newcommand \Mlg   {\mathcal M}
\newcommand \Enc {E}
\newcommand \CPr {\textup{CPr}}
\newcommand \DPr {\textup{DPr}}
\newcommand \defiff{\mathrel{\overset{\textup{def}}\iff}}
\newcommand\sg {s_G}
\newcommand\size[1]{\textup{size}(#1)}
\newcommand\Quo[1]{\textup{Quo}(#1)}
\newcommand\Equ[1]{\textup{Equ}(#1)}
\newcommand\Boo[1]{\mathsf B_{#1}}
\newcommand\At[1]{\textup{At}(#1)}
\newcommand\Spsign{\textup{Sp}}
\newcommand\Sp[1]{\Spsign(#1)}
\newcommand\LASp[1]{\textup{LASp}(#1)}
\newcommand\lint[1]{\lfloor #1\rfloor}
\newcommand\FS [1] {\textup{FS}_\wedge(#1)}
\newcommand \svec [1] {\vec{#1}\kern1.5pt}
\newcommand \pvec [1] {\vec{#1}\kern1.5pt'}
\newcommand \FD [1] {\textup{FD}(#1)}
\renewcommand \phi{\varphi}
\newcommand \Nplu {\mathbb N^+}
\newcommand \then {\Rightarrow}
\newcommand{\tbf}{\textbf}% text bold
\newcommand{\set}[1]{\{#1\}}% set 
\newcommand \nothing[1] {}
\newcommand \red[1]{{\textcolor{red}{#1}\color{black}}}
\begin{document}

\title[Generating Boolean lattices and key exchange]
{Generating Boolean lattices by few elements and exchanging session keys}

\author[G.\ Cz\'edli]{G\'abor Cz\'edli}
\email{\href{czedli@math.u-szeged.hu}{czedli@math.u-szeged.hu}}
\urladdr{\href{http://www.math.u-szeged.hu/~czedli/}{http://www.math.u-szeged.hu/{\textasciitilde}czedli/}}
\address{University of Szeged, Bolyai Institute. 
Szeged, Aradi v\'ertan\'uk tere 1, HUNGARY 6720}

\begin{abstract} 
Let $\Sp k$ denote the number of the $\lint{k/2}$-element subsets of a finite $k$-element set. We prove that the least size of a generating subset of the Boolean lattice with $n$ atoms (or, equivalently, the powerset lattice of an $n$-element set) is the least number $k$ such that $n\leq \Sp k$. 
Based on this fact and our 2021 protocol based on equivalence lattices, we  outline a cryptographic protocol for exchanging session keys, that is, frequently changing secondary keys. In the present paper, which belongs mainly to lattice theory, we do not elaborate and prove those details of this protocol that modern cryptology would require to guarantee security; the security of the protocol relies on heuristic considerations. However, as a first step, we prove that if an eavesdropper could break every instance of an easier protocol in polynomial time,  then  \clp{} would equal \clnp{}. As a byproduct, it turns out that in each nontrivial finite lattice that has a prime filter, in particular, in each nontrivial finite Boolean lattice, the solvability of systems of equations with constant-free left sides but constant right sides is an \clnp-complete problem. 
\end{abstract}

\thanks{This research was supported by the National Research, Development and Innovation Fund of Hungary, under funding scheme K 138892.  \hfill{\red{\tbf{\datum}}}}

\subjclass {Primary: 06D99. Secondary: 94A62, 94A60, 68Q25}

% 06D (1980-now) Distributive lattices
% 06D99 (1980-now) None of the above, but in this section

% 94A60 (1980-now) Cryptography [See also 11T71, 14G50, 68P25, 81P94]
% 94A62 (2000-now) Authentication, digital signatures and secret sharing

% 68 (1940-now) Computer science 
% 68Q25 (1980-now) Analysis of algorithms and problem complexity

\keywords{Boolean lattice, smallest generating set, cryptography, session key exchange, \clnp{}-complete.}

\maketitle

\section{Introduction}

\subsection{Targeted readership}
This is mainly a \emph{lattice theoretic paper} and the a main result belongs to lattice theory. However, the targeted readership is not restricted to lattice theorists. Those familiar with the concept of a Boolean lattice and that of \clnp{}-completeness should have no difficulty in reading the \emph{results} and even most other parts of the paper.
Some exceptions occur in Subsection \ref{subsect:msrvy}, which surveys how a series of lattice theoretic  investigations lead to the present paper, which could be interesting outside lattice theory and, perhaps, even outside mathematics.

\subsection{Our goal} As usual, $\Nplu=\set{1,2,\dots}$ stands for the set of positive integers. For $n\in\Nplu$, 
let $\Boo n=(\Boo n;\vee,\wedge)$ be the \emph{Boolean lattice} with $n$ atoms. Note that $\Boo n$ is isomorphic to (and so it can be defined as) the powerset lattice $(P(\set{1,\dots,n});\cup,\cap)$, whence $|\Boo n|=2^n$. 
A subset $X$ of $\Boo n$ is a \emph{generating set} of $\Boo n$ if no proper subset of $\Boo n$ that is  closed with respect to join ($\vee$) and meet ($\wedge$) includes $X$ as a subset. In Theorem \ref{thm:nleqSpk}, we are going to determine the smallest $k\in\Nplu$ such that $\Boo n$ has a $k$-element generating set.
Section \ref{sect:prrslt}, which  is a computer-assisted, indicates that if $k'>k$ but $k'$ is still small, then $\Boo n$ has  many $k'$-element generating sets.
Based on the plenty of these generating sets, Section \ref{sect:ACprtcl} outlines a 
cryptographic protocol for session key exchange. Section \ref{sect:NPh} shows that  if an eavesdropper (that is, a monitoring adversary) could solve every instance of a related but easier lattice theoretic problem in polynomial time,  then  \clp{} would equal \clnp{}.  Finally, Section \ref{section:warn} warns the reader that this connection with \clnp{} does not guarantee security in itself and, on the positive side, Section \ref{section:warn} shows some perspectives.

\subsection{A historical mini-survey}\label{subsect:msrvy}
From the author's perspective, the story started with Z\'adori \cite{zadori}, who gave a new proof of a result of Strietz \cite{strietz75}--\cite{strietz77} asserting that the \emph{equivalence lattice} $\Equ A$ (consisting of all equivalences of $A$) has a 4-element generating set provided that $A$ is
a finite set and $|A|\geq 3$. For short, we say that $\Equ A$ is \emph{$4$-generated} for these finite sets $A$. 
In the next step, based on Z\'adori's method, Chajda and Cz\'edli \cite{chajdaczg} proved that the lattices $\Quo A$ of all quasiorders (AKA preorders) of these finite sets $A$ and even some infinite sets $A$ are 6-generated; in fact, they are 3-generated if we add the unary operation $\rho\mapsto \rho^{-1}=\set{(y,x): (x,y)\in \rho}$
of forming inverses to the set $\set{\bigvee,\bigwedge}$ of infinitary lattice operations. Next, Cz\'edli \cite{CzGEateq} extended Z\'adori's result to  $\Equ A$ with $|A|=\aleph_0$. Furthermore, Cz\'edli \cite{CzGEq4,czgeek} and Tak\'ach \cite{takach} proved that $\Equ A$ and $\Quo A$ are 
4-generated and 6-generated, respectively, provided that $A$ is an infinite set  and there is no inaccessible cardinal $\lambda$ such that $\lambda\leq |A|$. Moreover, the 1999 paper Cz\'edli \cite{czgeek} proved that $\Equ A$ has a 4-element non-antichain generating set for these sets $A$. 
Note that Kuratowski \cite{kuratowski} gave a model of  ZFC in which there is no inaccessible cardinal at all.

Around 1999, Vilmos Totik\footnote{\href{https://en.wikipedia.org/wiki/Vilmos_Totik}{https://en.wikipedia.org/wiki/Vilmos\textunderscore{}Totik}} proved that our methods are insufficient to deal with inaccessible cardinals. Hence, the topic was put aside after the 1999 paper Cz\'edli \cite{czgeek}, and  it is still an open problem whether $\Equ A$ and $\Quo A$ are finitely generated (as complete lattices) if there exists an inaccessible cardinal $\leq |A|$. 

The research started again in 2015, when Dolgos \cite{dolgos}, one of Mikl\'os Mar\'oti's students, proved that $\Quo A$ is 5-generated for $|A|\leq \aleph_0$, and  Kulin \cite{kulin} extended this result to all sets $A$ such that there is no inaccessible cardinal $\lambda\leq |A|$. Not much later, Cz\'edli \cite{czg2017fourgen} and Cz\'edli and Kulin \cite{czgkulin} reduced the number of generators by proving that for all sets $A$ such that 
$|A|\neq 4$ and there is no inaccessible cardinal $\lambda\leq |A|$,  the complete lattice $\Quo A$ is 4-generated. The case $|A|=4$ is still open but the result was optimal for many other sets, as \cite{czg2017fourgen} proved that $\Quo A$ is not 3-generated if $|A|\geq 3$. 
Finding 4-element generating sets that are not antichains is more difficult but, after Strietz \cite{strietz75}--\cite{strietz77} and Z\'adori \cite{zadori}, some sporadic cases have recently been settled in  Ahmed and Cz\'edli \cite{delbrinczg} and Cz\'edli and Oluoch \cite{czgoluoch}.

In 2020, it appeared that the technique developed for infinite sets is appropriate to show that even some direct powers and products of some finite equivalence lattices are 4-generated and (consequently) $\Equ A$ and $\Quo A$ have very many 4-element generating sets if $|A|$ is a large finite number; see Cz\'edli \cite{czgDEBRauth} and Cz\'edli and Oluoch  \cite{czgoluoch}. Based on the abundance of the generating sets found in the just mentioned two papers,  Cz\'edli \cite{czgDEBRauth} in 2021 suggested a protocol (the \emph{2021 protocol} for short) for authentication and cryptography based on lattices. The questions of security of this protocol gets easier when the protocol is used only for session key exchange, because then we can practically disregard those adversaries that alter messages;  recall from Buegler \cite{buegler} that a \emph{session key} is a secondary key to be changed before each usage of a cryptographic protocol while the \emph{master key} remains unchanged. 
In other words, while Cz\'edli \cite{czgDEBRauth} puts the emphasis on authentication, now we have a reason to put it on session key exchange. 
Quite recently, while looking for small generating sets of some filters of quasiorder lattices, a proof in Cz\'edli \cite{czgfiltQuo} required to know the smallest size of a generating set of a finite Boolean lattice; this was the immediate motivation for  the present paper. 

Out of the several directions where the present paper will possibly lead, we only mention the following. A \emph{weak congruence lattice} of an algebra $\mathcal A=(A;F)$ is a bunch  of congruence lattices along a scaffolding, which is the subalgebra lattice of $\mathcal A$; see, e.g.,  \v{S}e\v{s}elja, Stepanovi\'c, and Tepav\v{c}evi\'c \cite{seseljaatal} for an exact definition and references. Now if $A$ is finite and  $F=\emptyset$, then we know from Z\'adori \cite{zadori} that the congruence lattices in question are equivalence lattices generated  by few (at most four) elements, and the  scaffolding is also generated by few elements by the main result of the present paper. This raises the question how many of its elements are needed to generate the weak congruence lattice in the $F=\emptyset$ case.

\section{Small generating sets of finite Boolean  lattices}\label{sec:genBoole}
For $n\in\Nplu$, we introduce the ``vertical-space-saving'' notation
\begin{equation}
\Sp n:={{n}\choose{\lint{n/2}}}
\end{equation}
where $\lint{n/2}$ is the (lower) integer part of $n/2$. For example, 
\begin{equation}
\Sp{32}= 601\,080\,390 \text{ \ and \ }\Sp{33}=  1\,166\,803\,110.
\label{eq:nLmgpGNkw}
\end{equation}
The notation $\Spsign${} comes from ``Sperner''; see later. For $n\in\Nplu$, let $\LASp n$ be the smallest $k\in\Nplu$ such that $n\leq \Sp k$. Note the rule: $n\leq \Sp k \iff \LASp n\leq k$; this explains the acronym, which comes from ``Left Adjoint of Sp".

\begin{theorem}\label{thm:nleqSpk}
For $n,k\in\Nplu$, $\Boo n$ has an at most $k$-element generating set if and only if $n\leq \Sp k$ or, equivalently,  if and only if $\,\LASp n\leq k$. In particular, $\LASp n$ is the smallest possible size of a generating set of $\Boo n$.
\end{theorem}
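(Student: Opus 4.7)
The plan is to reduce the statement to Sperner's theorem via a standard ``signature'' correspondence. Identify $\Boo n$ with the powerset lattice $(P(\set{1,\dots,n});\cup,\cap)$ and abbreviate $[n]:=\set{1,\dots,n}$. For any candidate $X=\set{A_1,\dots,A_k}\subseteq P([n])$, define the signature map $s\colon[n]\to P([k])$ by $s(i):=\set{\ell\in[k]: i\in A_\ell}$. The pivotal claim, valid for $n\geq 2$, is that $X$ generates $\Boo n$ if and only if the image $\set{s(1),\dots,s(n)}$ is an $n$-element antichain in $P([k])$. Granted this, Sperner's theorem---the largest antichain in $P([k])$ has size $\Sp k$---yields the theorem at once, since such an antichain of size $n$ exists iff $n\leq\Sp k$, witnessed by any $n$-subfamily of the middle layer of $\lint{k/2}$-element subsets.

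The sufficiency half of the claim is constructive. Given any antichain $\set{T_1,\dots,T_n}$ in $P([k])$, set $A_\ell:=\set{i\in[n]: \ell\in T_i}$, which forces $s(i)=T_i$. For every $i$, the meet $\bigcap_{\ell\in s(i)}A_\ell=\set{j\in[n]: s(i)\subseteq s(j)}$ collapses to $\set{i}$, because the antichain condition forbids $s(i)\subseteq s(j)$ when $j\neq i$. Thus every singleton $\set{i}$ lies in the sublattice generated by $X$, and taking joins of singletons recovers all of $P([n])$. As a bonus, when $n\geq 2$ every $n$-element antichain in $P([k])$ automatically avoids $\emptyset$ and $[k]$, so $\bigcup_\ell A_\ell=[n]$ and $\bigcap_\ell A_\ell=\emptyset$ likewise belong to the sublattice.

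The necessity half carries the real weight and is the main obstacle. By distributivity of $P([n])$, every element of the sublattice generated by $X$ admits a disjunctive normal form $\bigcup_{T\in\mathcal{F}}\bigcap_{\ell\in T}A_\ell$ with $\mathcal{F}$ a nonempty family of nonempty subsets of $[k]$; this set simplifies to $\set{j\in[n]: s(j)\in\mathcal{F}^{\uparrow}}$, where $\mathcal{F}^{\uparrow}$ is the upper closure of $\mathcal{F}$ in $P([k])$. Writing each singleton $\set{i}$ in this form, some $T\in\mathcal{F}$ must satisfy $T\subseteq s(i)$ while no $s(j)$ with $j\neq i$ may contain $T$. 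In particular no $s(j)$ with $j\neq i$ contains $s(i)$, so $s(i)$ is maximal among the signatures. Letting $i$ range over $[n]$ shows the $n$ signatures are pairwise incomparable, forming an $n$-element antichain in $P([k])$; Sperner then delivers $n\leq\Sp k$.
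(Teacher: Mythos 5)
Your argument is correct and is essentially the paper's own proof in different packaging: for necessity, your signature map $s$ is exactly the paper's embedding of the atoms into the free meet-semilattice $\FS X$ (whose order, after adjoining a top, is that of $\Boo k$), and both arguments extract an $n$-element antichain from disjunctive normal forms of the atoms and then invoke Sperner's theorem, while for sufficiency your sets $A_\ell=\set{i: \ell\in T_i}$ are precisely the paper's generators $X_a=H\cap\filter a$ indexed by the atoms $a$ of $\Boo k$. The only (shared) blemish is the degenerate case $n=k=1$, where under the paper's definition no one-element subset generates the two-element lattice; your explicit restriction of the key claim to $n\geq 2$ and the paper's empty meet in its analogue of your $\bigcap_{\ell\in s(i)}A_\ell$ both leave that corner uncovered.
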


For example, this theorem together with \eqref{eq:nLmgpGNkw} give that  $\Boo{1\,000\,000\,000}$ is $33$-generated but not $32$-generated.

\begin{proof} Let $\At{\Boo n}$ be the set of atoms of $\Boo n$. As usual, for  an element $u$ of a lattice $L$,  $\ideal u$ and $\filter u$ will stand for  $\set{x\in L: x\leq u}$ and $\set{x\in L: x\geq u}$, respectively. First, we show that for any subset $Y$ of $\Boo n$,
\begin{equation}
\text{if }Y\text{ generates }\Boo n\text{ and }  a\in\At{\Boo n}\text{, then }a=\bigwedge (Y\cap\filter a). 
\label{eq:ktNrMxtlgRn}
\end{equation}
As $Y$, say $Y=\set{b_1,\dots,b_m}$,  generates $\Boo n$ and $\Boo n$ is distributive,  $a=t(b_1,\dots,b_m)$ for  an $m$-ary disjunctive normal form, that is, $a$ is the join of meets of elements of $Y$. But $a$ is join-irreducible, whereby it is the meet of some elements of $Y$. This shows the ``$\geq$'' part of \eqref{eq:ktNrMxtlgRn}. The ``$\leq$'' is trivial, and we have proved  \eqref{eq:ktNrMxtlgRn}. 

Next, we  claim that for any subset $G$ of $\Boo n$,
\begin{equation}
\text{if $G$ generates $\Boo n$ and $k=|G|$,  then $n\leq \Sp k$.}
\label{eq:BrmpCtznnmKf}
\end{equation}
To show this, assume that $G$ is a $k$-element generating set of $\Boo n$. Let $X$ be a $k$-element set and denote by $\FS X$ the meet-semilattice freely generated by $X$. Denote by $M$ the meet-subsemilattice of $(\Boo n;\wedge)$ generated by $G$. Pick a bijective map $f_0\colon X\to G$. 
The freeness of $\FS X$ allows us to extend $f_0$ to a meet-homomorphism $f\colon\FS X\to M$, which is surjective since $f(X)=G$ generates $M$.  By \eqref{eq:ktNrMxtlgRn}, $\At{\Boo n}\subseteq M$. This together with the surjectivity of $f$ allow us to take an injective map $g\colon\At{\Boo n}\to \FS X$ such that, for all $a\in \At{\Boo n}$,  $f(g(a))=a$.  If we had that $g(a)\leq g(a')$ for distinct $a,a'\in \At{\Boo n}$, then $g(a)=g(a)\wedge g(a')$ would lead to $a=f(g(a))=f(g(a)\wedge g(a'))= f(g(a))\wedge f(g(a'))=a\wedge a'$, yielding that $a\leq a'$ and contradicting that $a$ and $a'$ are distinct atoms of $\Boo n$. 
Therefore $g(a)\parallel g(a')$, that is, $g(\At{\Boo n})$ is an $n$-element antichain in $\FS X$. Adding a top element to $\FS X$, we obtain another semilattice,  $\set{1}\cup\FS X$. We know from the folklore or from McKenzie, McNulty, and Taylor \cite[Page 240, \S4]{mcKmcNT} that $\set{1}\cup\FS X$  is order isomorphic to $\Boo{|X|}=\Boo k$. So $\Boo k$ has an $n$-element antichain. By Sperner's theorem \cite{sperner}, see also 
Gr\"atzer \cite[page 354]{GLTFound},  any antichain in $\Boo k$ has at most $\Sp k$ elements. This implies  \eqref{eq:BrmpCtznnmKf} and the ``only if'' part of the theorem.

Next, observe that 
\begin{equation}\left.
\parbox{10.6cm}{for any $m\leq n\in\Nplu$,  $\Boo m$ is a homomorphic image of $\Boo n$. Therefore, if $\Boo n$ has an at most $k$-element generating set, then so does $\Boo m$.}\,\,\right\}
\label{eq:wPsrTwgftjG}
\end{equation}
It suffices to show  the first part  for $m=n-1$. Let $c$ be a coatom (that is, a lower cover of 1) in $\Boo n$. Then $\ideal c\cong \Boo m$. The function $f\colon \Boo n\to \ideal c$  defined by $x\mapsto c\wedge x$ is a homomorphism by distributivity. As $x=f(x)$ for each $x\in \ideal c$,  we conclude \eqref{eq:wPsrTwgftjG}.

Next, to show the ``if'' part of the theorem, assume that $n\leq \Sp k$; we are going to show that $\Boo n$ has an at most $k$-element generating set. 
Based on \eqref{eq:wPsrTwgftjG}, we  can assume that $n=\Sp k$.
As $\Boo k$ is isomorphic to the powerset lattice $(P(\set{1,\dots,k});\cup,\cap)$ and the $\lint{k/2}$-element subsets of $\set{1,\dots,k}$ form an $n=\Sp k$-element antichain in $(P(\set{1,\dots,k});\cup,\cap)$, it follows that $\Boo k$ has an $n$-element antichain $H$. As $(P(H);\cup,\cap)\cong \Boo n$, 
it suffices to find a $k$-element generating set of the powerset lattice $P(H)=(P(H);\cup,\cap)$. 
For each $a\in\At{\Boo k}$, we let $X_a:=H\cap\filter a$. Then $X_a\in P(H)$ and $G:=\set{X_a: a\in \At{\Boo k}}$ is an at most $k$-element subset of $P(H)$.  To show that $G$ generates  $P(H)$,  it suffices to show that for every $h\in H$, 
\begin{equation}
\set{h}=\bigcap\set{X_a: a\in \At{\Boo k}\cap \ideal h}.
\label{eq:rZmszltpLrnts}
\end{equation}
For every $a\in \At{\Boo k}\cap \ideal h$, we have that $h\in H\cap\filter a=X_a$, showing the ``$\subseteq$'' part of \eqref{eq:rZmszltpLrnts}. 
Now assume that $h'\in H$ belongs to the intersection in \eqref{eq:rZmszltpLrnts}. Then $h'\in X_a$ for every  $a\in \At{\Boo k}$ such that $a\leq h$.
Writing this in a more useful way, 
\begin{equation*}
(\forall a\in \At{\Boo k})\,\,\bigl( a\leq h \then a\leq h'\bigr)\text{, that is, } \At{\Boo k}\cap\ideal h\subseteq \At{\Boo k}\cap\ideal {h'}.
\end{equation*}
Hence, using that each element of $\Boo k$ is the join of all atoms below it,  $h=\bigvee(\At{\Boo k}\cap\ideal h)\leq \bigvee(\At{\Boo k}\cap\ideal{h'})=h'$. 
But $h,h'\in H$ and $H$ is an antichain,  whereby $h\leq h'$ gives that $h'=h\in\set{h}$, showing the ``$\supseteq$'' part of \eqref{eq:rZmszltpLrnts}. Therefore, \eqref{eq:rZmszltpLrnts} and the ``if'' part of the theorem hold.
\end{proof}

\begin{corollary} If $2\leq k\in \Nplu$ and $n \leq \Sp k$, then the free distributive lattice $\FD k$ has a sublattice isomorphic to $\Boo n$.
\end{corollary}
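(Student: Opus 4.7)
Here is my plan. Theorem~\ref{thm:nleqSpk} already points to a natural antichain in $\FD k$ of size $\Sp k$. Let $g_1,\dots,g_k$ be free generators of $\FD k$ and, for each non-empty $S\subseteq\{1,\dots,k\}$, set $g_S:=\bigwedge_{i\in S}g_i$. The standard fact that an inequality between lattice polynomials in $g_1,\dots,g_k$ holds in $\FD k$ iff it holds in the two-element lattice under every substitution of the $g_i$'s by $0$ or $1$ gives the criterion
\[
g_U\le \bigvee_{R\in\mathcal C}g_R \iff (\exists R\in\mathcal C)\,R\subseteq U.
\]
Two consequences follow: the elements $\{g_S:|S|=\lint{k/2}\}$ form an antichain of size $\Sp k$ in $\FD k$, and $g_S\wedge g_T=g_{S\cup T}$ for distinct $S,T$ of this cardinality.

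Using $n\le\Sp k$, pick distinct $\lint{k/2}$-element subsets $S_1,\dots,S_n$ of $\{1,\dots,k\}$, define the ``correction'' element
\[
w:=\bigvee_{1\le j<l\le n}\bigl(g_{S_j}\wedge g_{S_l}\bigr),
\]
and, identifying $\Boo n$ with $P(\{1,\dots,n\})$, define $\phi\colon \Boo n\to \FD k$ by
\[
\phi(A):=w\vee\bigvee_{i\in A}g_{S_i}.
\]
The remaining task is to prove that $\phi$ is a lattice embedding; this immediately gives the required sublattice.

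Join preservation is immediate. Meet preservation is the heart of the matter: after expanding $\phi(A)\wedge\phi(B)$ by distributivity, every off-diagonal product $g_{S_i}\wedge g_{S_j}=g_{S_i\cup S_j}$ with $i\ne j$ is already a joinand of $w$ and therefore absorbed, while the diagonal terms $g_{S_i}$ with $i\in A\cap B$ remain, yielding $\phi(A)\wedge\phi(B)=\phi(A\cap B)$. For injectivity I would fix any $j$ in the symmetric difference of $A$ and $B$, say $j\in A\setminus B$, and test the two sides on the evaluation sending $g_l$ to $1$ iff $l\in S_j$. The equal cardinalities of the $S_i$'s force $g_{S_i}$ to evaluate to $1$ exactly when $i=j$, and the same cardinality reason makes every $g_{S_l\cup S_m}$ with $l\ne m$ vanish, so $w$ evaluates to $0$. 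Hence $\phi(A)$ evaluates to $1$ while $\phi(B)$ evaluates to $0$, showing that they are distinct in $\FD k$.

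The only substantive difficulty is meet preservation, and it is entirely resolved by the choice of $w$: without this correction the naive map $A\mapsto\bigvee_{i\in A}g_{S_i}$ fails, since the off-diagonal meets $g_{S_i\cup S_j}$ are in general non-trivial elements of $\FD k$; collecting them all into the single element $w$ neutralises the failure, after which the verification reduces to the evaluation criterion and a short distributivity computation.
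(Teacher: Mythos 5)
Your proof is correct, but it takes a genuinely different route from the paper. The paper's argument is structural: it invokes Theorem~\ref{thm:nleqSpk} to get a surjective homomorphism $\FD k\to\Boo n$ and then splits it using the fact that finite Boolean lattices are projective in the class of distributive lattices (Balbes \cite{balbes}), so the section is the required embedding. You instead build the embedding explicitly: the $0$--$1$ evaluation criterion for $\FD k$ shows that the meets $g_S$ over $\lint{k/2}$-element sets $S$ form a $\Sp k$-element antichain, and the correction element $w$ absorbs exactly the off-diagonal meets $g_{S_i}\wedge g_{S_j}=g_{S_i\cup S_j}$, which is what makes $\phi(A)\wedge\phi(B)=\phi(A\cap B)$ work; injectivity then follows from a single well-chosen evaluation, using that distinct equal-size sets have a strictly larger union. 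What your approach buys is self-containedness and concreteness: it needs neither the projectivity theorem nor even Theorem~\ref{thm:nleqSpk}, and it exhibits an explicit copy of $\Boo n$ inside $\FD k$; what it costs is length and one cosmetic edge case, namely $n=1$, where $w$ is an empty join and hence undefined in $\FD k$ (which has no least element) --- there you should simply take any two comparable elements, say $g_1\wedge g_2<g_1$, as the image of $\Boo 1$. With that one-line patch the argument is complete.
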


\begin{proof} As $\Boo m$ is a sublattice of $\Boo n$ for any $m\leq n$, we can assume that $n=\Sp k$. Theorem \ref{thm:nleqSpk} yields a surjective homomorphism $f\colon\FD k\to \Boo n$. Let $h\colon \Boo n\to\Boo n$ be the identity map (defined by $x\mapsto x$ for $x\in\Boo n$). Since $\Boo n$ is projective in the class of all distributive lattices by Balbes \cite[Theorem 7.1(i),(iii')]{balbes}, there is a homomorphism $g\colon \Boo n\to \FD k$ such that  $f g=h$. As the product $h$ is injective, so is $g$. Thus, $g(\Boo n)\cong \Boo n$ and  $g(\Boo n)$ is a required sublattice of $\FD k$. 
\end{proof}

\section{The abundance of small generating sets of finite Boolean lattices}\label{sect:prrslt}
We call a $k$-dimensional vector $\vec h=(h_1,\dots, h_k)$ a \emph{generating vector} of $\Boo n$ if the set $\set{h_1,\dots,h_k}$ of its components is a generating set of $\Boo n$. Here $|\set{h_1,\dots,h_k}|\leq k$ and no equality is required. If $k<n$, then $k$ is much smaller than $|\Boo n|=2^n$, whereby the components of a randomly chosen $k$-dimensional vector from $\Boo n^k$ are pairwise distinct with high probability. 
Therefore, the ratio of the $k$-element generating sets to all $k$-element subsets of $\Boo n$ is close to the ratio of the $k$-dimensional generating vectors to all $k$-dimensional vectors belonging to $\Boo n^k$. 

A computer program, written by the author and available from his \href{http://www.math.u-szeged.hu/~czedli}{website} or from the appendix section of \href{https://arxiv.org/abs/2303.10790v2}{arXiv:2303.10790v2} (the July 26, 2023 version of this paper)  counted the generating vectors of $\Boo{1000}$ among one hundred thousand randomly selected  $k$-dimensional vectors for some values of $k$. Some of these computational results are given below and even more in  \href{https://arxiv.org/abs/2303.10790v2}{arXiv:2303.10790v2}. 

\noindent
{\footnotesize 
\begin{verbatim}
n=1000 k=40  Tested:100000 Generating:    42;      506.867 seconds.
n=1000 k=50  Tested:100000 Generating: 59003;     1305.780 seconds.
n=1000 k=80  Tested:100000 Generating: 99990;     2647.147 seconds.
n=1000 k=90  Tested:100000 Generating: 99999;     2974.364 seconds.
n=1000 k=100 Tested:100000 Generating:100000;     3265.869 seconds.
\end{verbatim}
}
\noindent
Thus, we conjecture that  a random member of $\Boo{1000}^{50}$ is a 50-dimensional generating vector of $\Boo{1000}$ with probability at least $1/2$. Note that  $\LASp{1000}=13$.

%https://www.javatpoint.com/des-vs-aes
%https://www.techtarget.com/searchsecurity/definition/Advanced-Encryption-Standard
%\href{https://en.wikipedia.org/wiki/Advanced_Encryption_Standard}{Wiki}
%\href{https://en.wikipedia.org/wiki/Key-recovery_attack}{key recovery attack, Ciphertext indistinguishability}
%\href{https://en.wikipedia.org/wiki/Chosen_plaintext_attack}{chosen plain text attack}
%\href{https://en.wikipedia.org/wiki/One-time_pad}{one-time pad}
%\href{https://en.wikipedia.org/wiki/Session_key}{session key}

\section{A cryptographic protocol for session key exchange and encrypted communication}\label{sect:ACprtcl}
In this section, we outline how to tailor the 2021 protocol, see Cz\'edli \cite{czgDEBRauth}, from equivalence lattices to Boolean lattices but, as opposed to \cite{czgDEBRauth}, now we put the emphasis on session key exchange. The reader need not be an expert of cryptology. We  present only the main ideas in the paper; the caveats  in Section  \ref{section:warn} warn us that some details and proofs, which modern cryptology would expect, are still missing. As the session key exchange is (almost\footnote{\label{foot:clLtxr}It will be clear later that if a particular session key $\vec p(\vec h)$ is unused, then   $\vec p$, which the  Adversary can  intercept, gives  only the information mentioned in Footnote \ref{foot:ntlsGr}; this  is not enough to find $\vec h$ as there are astronomically many $k$-dimensional generating vectors.}) absolutely safe if the key remains unused, we are going to include the \emph{usage} of session keys in the suggested cryptographic protocols. This section and the protocols suggested in it rely only on heuristic considerations.  In order to mitigate this omen, note that the next section contains a proof of a related statement (and, hopefully, the heuristic considerations have some convincing value).  Furthermore, a session key  is to be used in some well-known cryptosystem like AES-256 (the 256-bit variant of Advanced Encryption Standard) or Vernam's cipher, and decades of experience show that these cryptosystems are safe when they are used appropriately. 
The full list of papers and other sources devoted to cryptosystems that can use session keys would possibly be longer than the present paper itself. Therefore, we reference only some introductory items\footnote{\label{foot:wikiitems}These Wikipedia items are the following: \href{https://en.wikipedia.org/wiki/Advanced_Encryption_Standard}{Advanced\textunderscore{}Encryption\textunderscore{}Standard}
, \href{https://en.wikipedia.org/wiki/Key-recovery_attack}{Key-recovery\textunderscore{}attack},
\href{https://en.wikipedia.org/wiki/Chosen_plaintext_attack}{Chosen\textunderscore{}plaintext\textunderscore{}attack},
\href{https://en.wikipedia.org/wiki/Session_key}{Session\textunderscore{}key}, 
\href{https://en.wikipedia.org/wiki/Gilbert_Vernam}{Gilbert\textunderscore{}Vernam} (they are clickable).}{} from Wikipedia (\href{https://www.wikipedia.org/}{https://www.wikipedia.org/}) that are sufficient to find lots of  related literature but note that the  reader need not read these items. However, as this concept occurs in the title, we mention that   the  terminology \emph{session key} comes from Buegler \cite{buegler}.

A \emph{session key} for a cryptosystem is a symmetric secret key used only once (or, at least, only very few times); symmetry means that encryption and decryption need the same key.  The importance of a session key is that while several traditional cryptosystems are vulnerable if the same key is used repeatedly, they are much safer if a key is used only once. We will always assume that
\begin{equation}
\parbox{11.0cm}{vectors of  element of $\Boo n$, session keys, and   \emph{plaintexts} (also called \emph{clear texts}) are  regarded \emph{bit vectors}, that is, vectors over the 2-element field when they are summands or a traditional cipher is applied to them.}
\label{eq:mrtmndksklD}
\end{equation}
With this convention (which is only a trivial question of encoding like ASCII), the well-known Vernam's cipher used with a session key
 $\vec c$ turns a clear text  $\vec x$ into the \emph{encrypted text} is $\vec x+\vec c$ (componentwise addition).  (We assume that  $\vec x$ does not have more bits than $\vec c$; if $\vec x$ has less bits than $\vec c$ then only the appropriate initial segment of $\vec c$ is used.)

In the subsequent two subsections, we give two versions of our protocol.

\subsection{The basic version}\label{subsect:strongest}
In our model, let $\Enc$ be a fixed traditional cryptosystem like Vernam's cipher or AES-256.
For a (secret) code $\vec u$ and a clear text $\vec x$, $\Enc(\vec u,\vec x)$ denotes the \emph{encrypted text} that $\Enc$ produces.
Kati\footnote{The  Hungarian variant of ``Cathy'' and ``Kate''.} communicates with her Bank online. 
They have previously agreed upon a \emph{secret master key} $\vec h$, which is a $k$-dimensional random generating vector $\vec h=(h_1,\dots, h_k)$ of $\Boo n$. This master key is a permanent key that both Kati and the Bank know and only they know; this is not a big restriction since most European banks do not allow anonymous clients, whereby Kati and the Bank have to meet before Kati opens a bank account. 
\begin{equation}
\parbox{9.5cm}{In addition to $\Enc$ and $\vec h$, there are three parameters in the model, $k$, $n$, and $b$. Let, say, $k:=50$, $n:=1000$, and $b:=100$.}
\label{eq:rmLgmBsm}
\end{equation}
When  Kati wants to send a clear text $\vec x=(x_1,\dots,x_b)\in \Boo n^b$ to the Bank, the first step is that she 
generates a random vector $\vec p=(p_1,\dots,p_b)$ of $k$-ary lattice terms or, rather,  she requests\footnote{\label{foot:bnkprvlgm} The Bank would certainly vindicate rights to generate $\vec p$ and would not allow Kati to do so.} such a vector  from the Bank; see \eqref{eq:rmLgmBsm}.
\begin{equation}
\begin{aligned}
\includegraphics[scale=0.23]{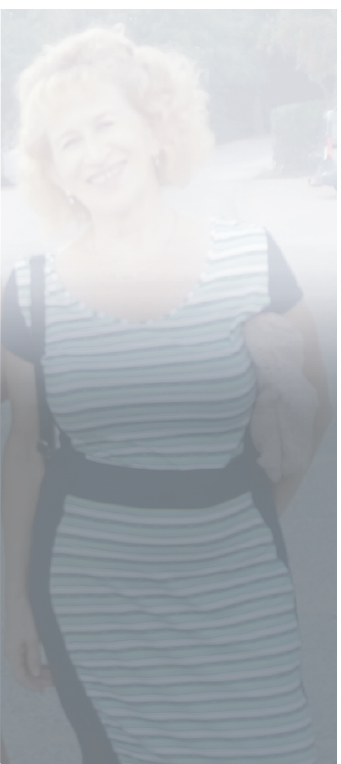}
\kern 13pt
\raisebox{21.5pt}
{\parbox{6.5cm}{
With this $\vec p$ obtained from the Bank, 
Kati computes  the \emph{session key} $\vec u:=\vec p(\vec h)=(p_1(\vec h),\dots, p_b(\vec h))$, uses the encryption $\Enc$ to obtain the encrypted text $\Enc(\vec u,\vec x)$, and sends $\Enc(\vec u,\vec x)$ together with $\vec p$ to the Bank.}}
\kern13pt
\includegraphics[scale=0.21]{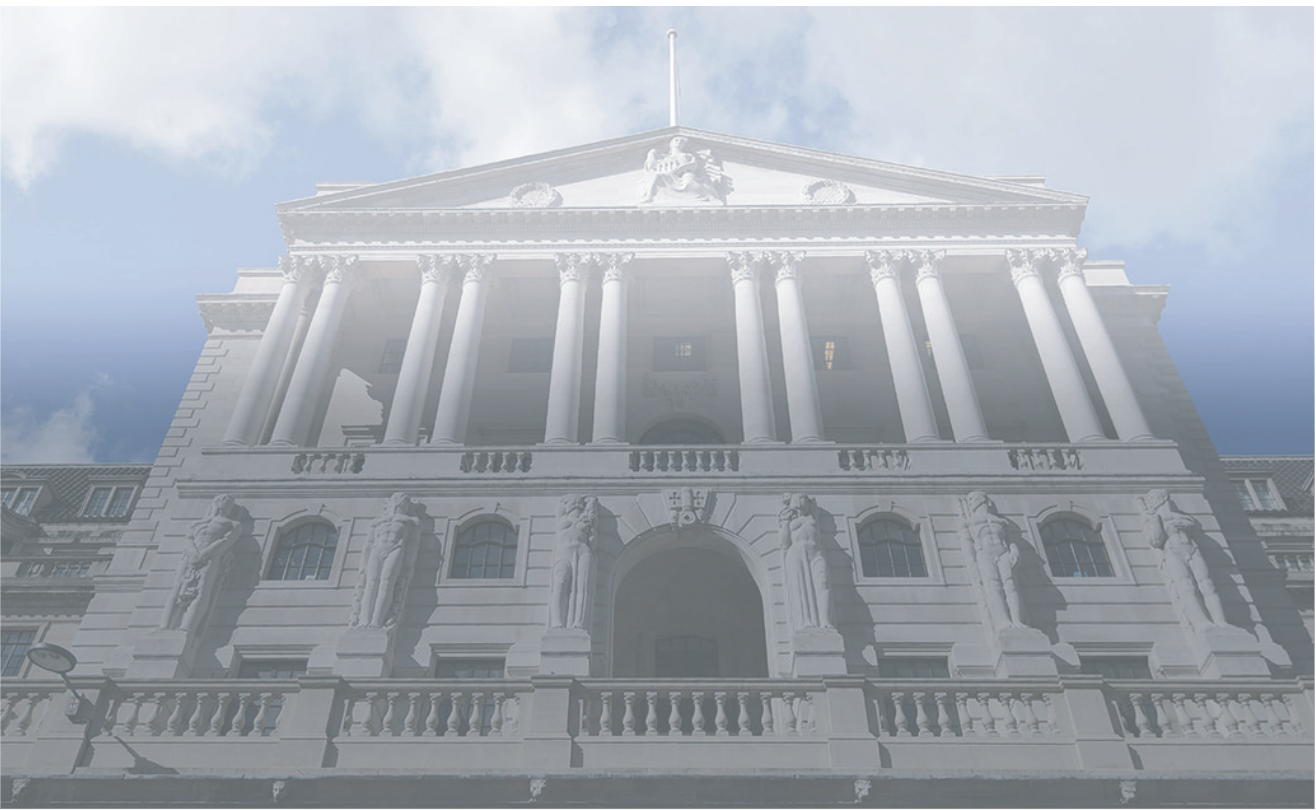}
\end{aligned}
\label{eq:lrJgfncNpZnkt}
\end{equation}
Changing their roles (except that the Bank generates $\vec p$), the Bank can also send an encrypted message to Kati. 
Too simple terms\footnote{\label{foot:ntlsGr}The concept of ``not too simple terms'' should be defined. For example, we can require that $\set{h_1,\dots,h_k} \cap \set{p_1(\vec h),\dots,p_b(\vec h)}=\emptyset$. Footnote 22 in \href{https://arxiv.org/abs/2303.10790v2}{arXiv:2303.10790v2} gives some vague ideas how to obtain random terms that are hopefully not too simple on the average.}  should be avoided, of course. There is an Adversary who  eavesdrops on the communication channel and intercepts messages. Furthermore, he can alter messages and pretending to be Kati or the Bank, he can send fake messages\footnote{Moreover, the Adversary can deploy some malware on Kati's computer, but this dangerous threat has not much to do with mathematics and so it is disregarded in the paper.}.   As exchanging session keys is meaningful only if these keys are used, \eqref{eq:lrJgfncNpZnkt} combines this exchange with the use of section keys.
Observe that each of $\vec u$ and $\vec x$ can take $|\Boo n|^b=2^{n b}$ many values.  If $n b$ is small, then a random $nb$-dimensional vector equals   $\vec x$  with probability $2^{-n b}$, which cannot be neglected. So if $n b$ is small, then the Adversary can experiment with a random $\vec x$ and he succeeds in breaking the protocol too often.  In particular, we note for later reference that  
\begin{equation}
\parbox{8cm}{if $n=2$,  and $b=2$,  then, on average,  the Adversary can  break the protocol in every sixteenth step;} 
\label{eq:hlMrwzfg}
\end{equation}
this would allow him to  steal the money from Kati's bank account easily. It is reasonable to believe that if $nb$ is large, say, if $nb\geq 5000$, then the protocol outlined above is safe. As there are  $2^{nb}$  many candidate vectors for $\vec x$, the Adversary cannot try each of them in his lifetime (in fact, not even in the expected lifetime of the Solar System). Similarly, there are so many possible candidates for the master key $\vec h$ that the Adversary cannot find it by random trials.

Observe that the Adversary is not in the position to crack the  cipher $\Enc$ with a chosen plaintext attack.  Indeed, 
\begin{equation}
\parbox{9.8cm}{if he alters the $\vec p$ component in a message $(\vec p, \, \Enc(\vec u,\vec x))$, then the addressee works with a corrupted session key, decrypts the message into something nonsense, and stops communication. The same happens if the Adversary changes $(\vec p, \, \Enc(\vec u,\vec x))$  in any other way or he tries to send his own message; all he can learn is mentioned in  Footnote \ref{foot:clLtxr} but this  is not enough for him.
}
\label{eq:tljShnmDblM}
\end{equation} 
The chance that  an altered or Adversary-made message leads to a meaningful decrypted text can be neglected.  
However, the Adversary might have a chance to focus on $\Enc$ in case he conjectures  what $\vec x$ could be;  he can even try thousands of candidate $\vec x$'s. For example, a few days before the deadline that all  citizens have to pay 500 Euros as a local tax,  it is reasonable to guess that the clear text in Kati's message is ``transfer 500 Euros to the revenue account of the city''. So there is a real chance that 
\begin{equation}
\text{the Adversary knows a triple }\,\, \bigl(\vec p,\, \Enc(\vec p(\vec h),\vec x), \, \vec x)\bigr).
\label{eq:nRsTbRcpnbkBN}
\end{equation}
Observe that 
\begin{equation}
\parbox{10.3cm}{if $\Enc$ is appropriately chosen, which we assume, then the Adversary hardly has any chance to extract $\vec u:=\vec p(\vec h)$ from the last two components, $\Enc(\vec u,\vec x)$ and $\vec x$,  of the triple mentioned in \eqref{eq:nRsTbRcpnbkBN}.}
\label{eq:kvhttJndKrszrmgx}
\end{equation}
In other words, we chose an $\Enc$ that is not vulnerable for \emph{key-recovery attacks}.
In Section \ref{sect:NPh}, we are going to have a closer look at the situation when
\begin{equation}
\text{the Adversary acquires a pair $(\vec p, \vec u)=(\vec p, \vec p(\vec h))$,}
\label{eq:nXpvrLn}
\end{equation}
but let us emphasize here that it is very unlikely that the Adversary reaches \eqref{eq:nXpvrLn} in his lifetime. 
Even if the Adversary collects this sort of information several times, this is still insufficient  for him since  there are astronomically many possible master keys. Therefore, we believe that protocol \eqref{eq:lrJgfncNpZnkt} is very safe and, in particular, it is safer than protocol $\Enc$.

Vernam's cipher is known to be far  from withstanding a key-recovery attack in situation \eqref{eq:kvhttJndKrszrmgx}, that is, when the Adversary knows $\Enc(\vec u,\vec x)=\vec u+\vec x$ and $\vec x$. Indeed, then  he gets  $\vec u$ by subtraction. The following ad hoc remark might help.

\begin{remark}\label{rem:thRmdzsrmSh}
Assume that  every clear text in our model is a string of characters (with ASCII codes) 
0, 1,\dots, 127.  (A value other then $128-1=127$ would not make much difference.)
Call these characters \emph{eligible characters}. Also, let us call the characters (with ASCII codes) 128, 129, \dots, 255  \emph{false characters}. Denote the clear text  by $\vec y$. Insert at least as many false characters into $\vec y$ as the number of its (eligible) characters in an 
\emph{appropriate\footnote{We suggest that the false characters and the eligible characters follow similar distributions; this makes it more difficult to separate these two sets of characters with statistical methods.} random way} to obtain another vector, $\vec x$.  For example, if $\vec y=(65,78,32,88,58)$, then 
\begin{align*}
\vec x\text{ can be }&(\red{130}, \red{156}, 65, 78, \red{201}, 32, \red{203},88, \red{241}, 58, \red{151})\cr
\text{or } &( 65, \red{143}, \red{179},78, \red{184}, 32,88, \red{182},  \red{252},\red{137}, 58),\text{ etc..}
\end{align*}
Now if Kati wants to send the clear text $\vec y$ to the Bank, then she turns $\vec y$ to a random $\vec x$ described above and applies protocol  \eqref{eq:lrJgfncNpZnkt}. 
So Kati sends $\Enc(\vec u,\vec x)$ to the Bank. Then Bank, armed with $\vec p$ and $\vec u=\vec p(\vec h)$, decrypts $\Enc(\vec u,\vec x)$ to $\vec x$ as before  and obtains $\vec y$ by  omitting the false characters from $\vec x$.
\end{remark}

\subsection{A weaker version}\label{subsect:weakest}
In this version, the only purpose is \emph{authentication}; that is, Kati wants to prove her identity to the Bank. This protocol has nothing to do with session keys or $\Enc$. The parameters and the notations are the same an in \eqref{eq:lrJgfncNpZnkt}. 
\begin{equation}
\parbox{10.0cm}{To authenticate herself, Kati asks a random $\vec p$ from the Bank and
after receiving $\vec p$, she computes and sends  $\vec p(\vec h)$ to the Bank.}
\label{eq:weakvers}
\end{equation}
The idea is that  the Bank assumes that (except for the Bank itself) only Kati knows $\vec h$ and only she can compute $\vec p(\vec h)$. In our situation, Kati and the Bank do not have a symmetric role since the Bank does not authenticate itself. (When banking online, Kati avoids fake sites and clicks only on the real website of the Bank since her contract with the Bank,   signed when she opened the account, contains the Bank's URL.)
If the Adversary is passive, that is, if he can monitor messages but cannot alter them and cannot send his own messages, then it is only \eqref{eq:nXpvrLn} where he can get to.
Indeed, as it is clear from \eqref{eq:weakvers} and even from Footnote \ref{foot:bnkprvlgm},
the Adversary cannot choose $\vec p$ himself. Furthermore, the argument in \eqref{eq:tljShnmDblM} shows that the Adversary cannot alter a message in any other way. Hence, Kati has no reason to afraid of an active adversary.

However, if protocol \eqref{eq:weakvers} is adapted by two equal communicating parties so that each of them can play the role of the Bank, then there can be an active adversary, who  can alter and  send messages. For this Adversary, the chance to find the (master and only) key $\vec h$ is now better than in \eqref{eq:nXpvrLn} since he can choose $\vec p$. For this situation, we cannot prove anything but we guess that even the authentication protocol \eqref{eq:weakvers} would be safe in case of an appropriate strategy of choosing $\vec p$.

\begin{remark}
For authentication, it is safer to  send a message something like ``This is Kati'' encrypted using protocol \eqref{eq:lrJgfncNpZnkt}  fortified with Remark \ref{rem:thRmdzsrmSh}  rather than going after  \eqref{eq:weakvers}.
 This method works even if \eqref{eq:lrJgfncNpZnkt} is adapted by two equal communicating parties.
\end{remark}

\section{Even an easier problem is hard}\label{sect:NPh}
This section is motivated by the Adversary's problem in the situation described in 
\eqref{eq:nXpvrLn}; we study the hardest case of this problem. Note that the result of this section is meaningful, although less motivated, even without referencing Section \ref{sect:ACprtcl}. 

As in Section \ref{sect:ACprtcl}, we will assume that $n,k,b\in\Nplu$, $\vec p$ is a $b$-dimensional vector of $k$-ary lattice terms, and $\vec u\in \Boo n^b$. Writing $\vec x=(x_1,\dots,x_k)$  instead of $\vec h\in \Boo n^k$, the problem corresponding to \eqref{eq:nXpvrLn}  is this:
\begin{equation}
\CPr(n,b):\quad
\parbox{9cm}{given an input  $\vec p(\vec x)=\vec u$ with 
$\vec u\in \Boo n ^b$, find a solution of  the system  $\vec p(\vec x)=\vec u$ of equations for the unknown 
$\vec x\in \Boo n ^k$ 
\emph{in those cases} where there exists a solution.} 
\label{eq:sJbKmlDks}
\end{equation}
With the same meaning of $n$, $k$, $b$,  $\vec p$, and $\vec u$, we also define a related decision problem:
\begin{equation}
\DPr(n,b):\quad
\parbox{9cm}{given an input  $\vec p(\vec x)=\vec u$  with $\vec u\in \Boo n ^b$, decide whether the system $\vec p(\vec x)=\vec u$ of equations has a solution in $\Boo n ^k$ for the unknown $\vec x$.}
\label{eq:lnRbzkTjl}
\end{equation}
The acronyms $\CPr$ and $\DPr$ come from ``Construction Problem'' and ``Decision Problem'', respectively. Let $\size{\vec p(\vec x)=\vec u}$ and $\size{\vec h}$
denote the \emph{size} of $\vec p(\vec x)=\vec u$ and that of $\vec h$, respectively; these sizes are the respective numbers of bits; see \eqref{eq:mrtmndksklD}.

There are many books and papers dealing with the widely known concept of the complexity classes \clp{} and \clnp{}; some of them will be cited later but even \href{https://en.wikipedia.org/wiki/NP-completeness}{Wikipedia} is sufficient for us.  However, \clp{}, \clnp{}, and \clnp{}-completeness are usually about \emph{decision problems} 
while $\CPr(n,b)$  in \eqref{eq:sJbKmlDks} is not such.  There is another difference: while we require an answer for \emph{each input string} in case of a decision problem, this is not so in case of $\CPr(n,b)$. (In particular,  an algorithm solving $\CPr(n,b)$ need not even halt if $\vec p(\vec x)=\vec u$ unsolvable.)  These circumstances constitute our excuse that we neither define what the \clnp{}-completeness of  $\CPr(n,b)$ could mean nor we know whether $\CPr(n,b)$ would have such a property (as we would experience difficulty with a suitable replacement of $\Alg_1(d)$ later in the proof). However, we can safely agree to the following terminology:
\begin{equation}
\parbox{11.2cm}%{11.9cm}
{$\CPr(n,b)$, given in \eqref{eq:sJbKmlDks}, \emph{is solvable in polynomial time}  \quad $\defiff$ \quad there are an algorithm $\Alg(n,b)$ and a polynomial $f^{(n,b)}$ such that for every  input equation $\vec p(\vec x)=\vec u$ of $\CPr(n,b)$, \emph{if}  $\vec p(\vec x)=\vec u$ has a solution, 
  \emph{then}  $\Alg(n,b)$ finds one of its solutions in (at most) $f^{(n,b)}(\size{\vec p(\vec x)=\vec u})$ steps.}
\label{eq:jrDljrkthlH}
\end{equation}
The algorithm and the polynomial depend on the parameters $n$ and $b$. We could have written  ``time'' instead of ``steps''. Later, we will always omit ``(at most)''.  

We have the following statement, in which $b$ denotes the dimension of $\vec p$.

\begin{proposition}\label{prop:NP} For $2\leq b\in\Nplu$ and $n\in\Nplu$, if $\CPr(n,b)$, defined in  \eqref{eq:sJbKmlDks}, is solvable in polynomial time  then \clp{} is equal to \clnp{}.
\end{proposition}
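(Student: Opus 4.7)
The plan is to exhibit a polynomial-time many-one reduction from 3-\textsc{Sat} to the decision problem $\DPr(n,b)$ of \eqref{eq:lnRbzkTjl}, and then to show that a polynomial-time algorithm for $\CPr(n,b)$ yields one for $\DPr(n,b)$. For the second step, assume that $\Alg(n,b)$ and $f^{(n,b)}$ witness polynomial-time solvability of $\CPr(n,b)$; given an input $\vec p(\vec x)=\vec u$ of $\DPr(n,b)$, simulate $\Alg(n,b)$ on it for $f^{(n,b)}(\size{\vec p(\vec x)=\vec u})$ steps, evaluate $\vec p$ on whatever candidate has been output, and answer YES iff the outcome equals $\vec u$. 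By \eqref{eq:jrDljrkthlH}, a valid solution is produced whenever one exists, and any candidate produced in the unsolvable case is rejected by the evaluation check; both the simulation and the evaluation are polynomial.

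I would first reduce 3-\textsc{Sat} to $\DPr(1,2)$, so that $\Boo 1=\set{0,1}$. Given a 3-CNF formula $\phi=\bigwedge_{j=1}^m C_j$ over Boolean variables $y_1,\dots,y_k$, introduce lattice variables $x_i^+$ and $x_i^-$ for each $i$, and let $\bar C_j$ be obtained from $C_j$ by replacing each positive literal $y_i$ by $x_i^+$ and each negative literal $\neg y_i$ by $x_i^-$. Define
\begin{equation*}
p_1:=\Bigl(\bigwedge_{j=1}^m \bar C_j\Bigr)\wedge\bigwedge_{i=1}^k(x_i^+\vee x_i^-),\quad u_1:=1,\qquad p_2:=\bigvee_{i=1}^k(x_i^+\wedge x_i^-),\quad u_2:=0.
\end{equation*}
In $\set{0,1}$, the equation $p_2=0$ forces $x_i^+\wedge x_i^-=0$ for every $i$, while the subconjunct $x_i^+\vee x_i^-=1$ of $p_1=1$ upgrades this to $\set{x_i^+,x_i^-}=\set{0,1}$; putting $y_i:=x_i^+$ then recovers a Boolean assignment, and the conjuncts $\bar C_j=1$ read as ``clause $C_j$ is satisfied''. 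Conversely, any satisfying assignment of $\phi$ yields a solution in the obvious way, so the reduction is correct and clearly polynomial.

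To cover arbitrary $n\geq 1$ and $b\geq 2$, I would use the fact that for any atom $a$ of $\Boo n$ the principal filter $\filter a$ is prime (by distributivity and the join-irreducibility of $a$), so its characteristic map $\chi_{\filter a}\colon\Boo n\to\set{0,1}$ is a lattice homomorphism sending $0_{\Boo n}$ to $0$ and $1_{\Boo n}$ to $1$. Re-reading the system above inside $\Boo n$ with $u_1:=1_{\Boo n}$ and $u_2:=0_{\Boo n}$, any $\set{0,1}$-valued solution is automatically a solution in $\Boo n$ (since $\set{0,1}$ is a sublattice), while applying $\chi_{\filter a}$ componentwise to any solution $\vec y\in\Boo n^{2k}$ produces a $\set{0,1}$-valued one; hence solvability in $\Boo n$ is equivalent to solvability of the original $\DPr(1,2)$-instance. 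For $b>2$, pad with $b-2$ copies of the equation $p_2=0$. Chaining the reductions $\textsc{3-Sat}\leq_p\DPr(n,b)\leq_p\CPr(n,b)$, the assumed polynomial-time solvability of $\CPr(n,b)$ would place 3-\textsc{Sat} into $\clp$, hence $\clp=\clnp$. The delicate point is the encoding trick that packs all ``equal $1$'' constraints into one equation and all ``equal $0$'' constraints into another, allowing $b=2$; the extension to arbitrary $n$ then rides on the prime-filter retract.
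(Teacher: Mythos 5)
Your proposal is correct and follows essentially the same route as the paper: a timed simulation of the assumed $\CPr(n,b)$-solver followed by a verification step reduces $\DPr(n,b)$ to $\CPr(n,b)$, and an \clnp{}-complete problem is packed into a $b$-equation system by collecting all ``$=1$'' constraints into one meet and all ``$=0$'' constraints into one join, with the join-primeness of an atom (equivalently, the prime filter $\filter a$) pulling $\Boo n$-solutions back to $\set{0,1}$. The only difference is cosmetic --- you reduce from 3-\textsc{Sat} via a dual-rail encoding $x_i^+,x_i^-$ where the paper reduces from $3$-coloring via the triples $r_v,w_v,g_v$ --- and both choices serve the same purpose of simulating negation without complementation in lattice terms.
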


Even if the famous ``is \clp{} equal to \clnp{}?'' problem is, unexpectedly, solved affirmatively in the future, the \emph{proof} below will still say something on the difficulty of  $\CPr(n,b)$.

\begin{proof}
In the whole proof, we assume that $2\leq b\in\Nplu$, $n\in\Nplu$, and  $\CPr(n,b)$ is solvable in polynomial time. 

In principle, we should have written ``Turing machine'' in  \eqref{eq:jrDljrkthlH} rather than ``algorithm''\footnote{and ``input string'' rather than ``input equation'', but this distinction would not make an essential difference as the syntax of the input string can be checked in polynomial time.}. Fortunately, the algorithms in the proof (which are clearly equivalent to usual computer programs) can be simulated by Turing machines and this simulation preserves the property ``in polynomial time''; see, for example, Theorem 17.4 in Rich \cite{rich}. By the same theorem, for $n'$ computer steps%
\footnote{We can think of the commands in low-level computer programming languages but not of  compound commands like ``NextPrimeAbove(n)'' of ``InvertMatrix(A)'' in high-level programming languages.}  (and for $n'$ steps in our mind), the simulating Turing machine  needs $(O(n'))^6$ steps.  Therefore,  we will mostly  speak of polynomials without specifying their degrees even when a sub-algorithm is clearly linear (or even better) in our mind, that is, for our computers. For example, 
\begin{equation}
\parbox{11.4 cm}{for each fixed $d\in\Nplu$, there are  a polynomial $f^{(d)}_1$ and an algorithm $\Alg_1(d)$ such that, for  each $\xi\in\Nplu$, 
$\Alg_1(d)$ computes and stores $\xi^d$ in   $f_1^{(d)}(\xi)$ steps.}
\label{eq:sTdOssteps}
\end{equation}
Clearly, there are polynomials $f_2^{(n,b)}$ and $f_3$ and  algorithms $\Alg_2(n,b)$ and $\Alg_3$ such that for all  inputs  $\vec p(\vec x)=\vec u$, as in \eqref{eq:sJbKmlDks}, and   $\vec h\in \Boo n ^k$,
\begin{align}
&\parbox{8cm}{$\Alg_2(n,b)$ decides in  $f_2^{(n,b)}\bigl(\size{\vec p(\vec x)=\vec u}+\size{\vec h}\bigr)$ steps whether $\vec h$ is a solution of $\vec p(\vec x)=\vec u$, and}
\label{eq:xshklXpslg}\\
&\parbox{8cm}{$\Alg_3$ computes and stores the number $\size{\vec p(\vec x)=\vec u}$  in  $f_3\bigl(\size{\vec p(\vec x)=\vec u}\bigr)$ steps.}
\label{eq:dpRkCdVr}
\end{align}
Let $\Alg(n,b)$ and $f^{(n,b)}$ be chosen according to \eqref{eq:jrDljrkthlH}. We can assume that $f^{(n,b)}$ is of the form $f^{(n,b)}(\xi)=\xi^{d(n,b)}$ for some $d(n,b)\in\Nplu$.  Then  $\Alg(n,b)$ halts in $\bigl(\size{\vec p(\vec x)=\vec u}\bigr){}^{d(n,b)}$ steps for any solvable input $\vec p(\vec x)=\vec u$ but we do not know what  $\Alg(n,b)$ does and whether it ever  halts at other inputs. Using \eqref{eq:sTdOssteps}--\eqref{eq:dpRkCdVr}, we define another algorithm $\Blg(n,b)$ as follows. 
{The input of $\Blg(n,b)$ is a system of equations $\vec p(\vec x)=\vec u$ from}  \eqref{eq:lnRbzkTjl}; let $s:= \size{\vec p(\vec x)=\vec u}$.
The first task of $\Blg(n,b)$ is to save a copy of  $\vec p(\vec x)=\vec u$; this needs $ f_0(s)$ steps where $f_0$ is a polynomial not depending on the parameters $n$ and $b$ and the input  $\vec p(\vec x)=\vec u$. The second part of $\Blg(n,b)$ is $\Alg_3$, which borrows the  
 input $\vec p(\vec x)=\vec u$ from $\Blg(n,b)$ and puts  $s$ to the output stream  in $f_3(s)$ steps. 
The next part of $\Blg(n,b)$ is $\Alg_1(d(n,b))$, which considers the output of $\Alg_3$ as an input and puts $f^{(n,b)}(s)=s^{d(n,b)}$ into a (counter) variable $c$ in $f_1^{(d(n,b))}(s)$ steps. Then $\Blg(n,b)$ performs the steps of $\Alg(n,b)$ and the ``$(\alpha)$--$(\delta)$-strides'' given below alternately. (Here a  ``stride'' means a finite sequence of steps, possibly just one step.)  After the first $\Alg(n,b)$-step, $\Blg(n,b)$ performs the following strides.
\begin{enumerate}
\item[($\alpha$)] $\Blg(n,b)$ decreases $c$ by 1.
\item[($\beta$)] $\Blg(n,b)$ verifies whether $c = 0$.
\item[($\gamma$)] $\Blg(n,b)$ checks whether $\Alg(n,b)$ has halted.
\item[($\delta$)] If $c=0$ or  $\Alg(n,b)$ has halted then, using the saved copy of  $\vec p(\vec x)=\vec u$,  $\Blg(n,b)$ executes $\Alg_2(n,b)$ to verify whether the output of $\Alg$ is a solution of  $\vec p(\vec x)=\vec u$. If $\Alg_2(n,b)$ terminates with ``yes'', then $\Blg(n,b)$ outputs ``yes, the equation is solvable'' and halts. Otherwise, if $\Alg_2(n,b)$ terminates with ``no'', then $\Blg(n,b)$ outputs ``no, the equation is not solvable'' and halts.
\end{enumerate}
After these $(\alpha)$--$(\delta)$-strides, the next $\Alg(n,b)$-step is performed, then the $(\alpha)$--$(\delta)$-strides again, etc. 
The \emph{kernel} of the  $(\delta)$-stride is its part following the \emph{premise}  ``if $c=0$ or  $\Alg(n,b)$ has halted''; this kernel is performed only once.  
As $c\leq s^{d(n,b)}$, there is a polynomial $f_4^{(n,b)}$, not depending on the input of $\Blg(n,b)$, such that each of the $(\alpha)$--$(\gamma)$-strides can be done in $f_4^{(n,b)}(s)$ many  steps and, furthermore, the same holds for every $\Alg$-step (since it is only a one-step stride) and for  the condition  part of $(\delta)$. The $\Alg$-step, $(\alpha)$, $(\beta)$, $(\gamma)$, and the premise of $(\delta)$ are performed $f^{(n,b)}(s)=s^{d(n,b)}$ times, each.  
The kernel of the $(\delta)$-part, which is performed only once, is the same as $\Alg_2(n,b)$. The input of $\Alg_2(n,b)$ in this case is (the saved copy of) $\vec p(\vec x)=\vec u$ (of size $s$) together with $\vec h$, taken from the output stream of $\Alg(n,b)$. (Even if $\Alg(n,b)$ does not halt, there is a memory space --- or, in case of a Turing machine,  there is an output tape --- where $\vec h$ is expected when it exists.) As an element of $\Boo n$ can be stored in $n$ bits, $\size{\vec h}=n k$. Here $n$ is a constant and $k\leq s$ since $\vec x$ has $k$ components that occur in $\vec p=(\vec x)=\vec u$. Hence,
$\size{\vec h}\leq n s$, whereby $\Alg_2(n,b)$ decides in 
$f_2^{(n,b)}(s+n s)=f_2^{(n,b)}((n+1)s)$ steps whether the output of $\Alg(n,b)$ is a solution of our system of equations.  Therefore, $\Blg(n,b)$ 
 halts after
\begin{equation}
g^{(n,b)}(s):=f_0(s)+f_3(s)+f_1^{(d(n,b))}(s)+ f^{(n,b)}(s)\cdot f^{(n,b)}_4(s)+f_2^{(n,b)}((n+1)s)
\label{eq:lkMdszKznm}
\end{equation}
steps. As we treat the parameters $n$ and $b$ as constants,  $g^{(n,b)}$ is a univariate polynomial. Since the simulated $\Alg$ finds any solution before the counter $c$ becomes 0, $\Blg$ correctly decides whether $\vec p(\vec x)=\vec u$ has a solution or not. That is, $\Blg$ solves $\DPr(n,b)$. We have seen that  $g^{(n,b)}$  in \eqref{eq:lkMdszKznm} is a polynomial, whereby
\begin{equation}
\parbox{7cm}{$\DPr(n,b)$, defined in \eqref{eq:lnRbzkTjl}, is in \clp{}, and $\Blg(n,b)$ solves it in $g^{(n,b)}($input size) steps.}
\label{eq:szKnmRrlSTgF}
\end{equation}

As the next step of the proof, we focus on another problem. 
An input of the \emph{$3$-coloring problem} 
is a finite  graph $G=(\set{1, \dots, t},E_G)$, where 
$t\in\Nplu$ and the edge set $E_G$ consists of some two-element subsets of $\set{1,\dots,t}$. 
By a \emph{$3$-coloring}  we mean a sequence $C_1$, $C_2$,  \dots, $C_t$ of nonempty subsets of $\set{r,w,g}:=\{$red, white, green$\}$ such that whenever $\set{i,j}\in E_G$, then $C_i\cap C_j=\emptyset$. (This is %clearly
equivalent to the original definition, where each vertex has exactly one color
since we can change a color $\xi$ to $\set{\xi}$ and, in the converse direction, we can take the lexicographically first element of each nonempty subset of $\set{r,w,g}$.)

To reduce the 3-coloring problem to problem $\DPr(n,b)$, let $G$ be the graph from the previous paragraph, and let $\sg :=\size G$.
Let  $r_1$, $w_1$, $g_1$,  \dots, $r_t$, $w_t$, $g_t$ be variables;  their task is to determine a 3-coloring. These $k:=3t$  variables form the components of a vector denoted by $\vec x$.  For each vertex $v\in\set{1,\dots,t}$ and each edge $\set{i,j}\in E_G$, consider the $k$-ary lattice terms 
\begin{equation}
  a_{v}(\vec x):= r_v\vee  w_v\vee  g_v \text{ and }
  b_{i j}(\vec x):=(r_i\wedge r_j)\vee (w_i\wedge w_j)\vee
(g_i\wedge g_j).
\label{eq:mklSkdnmRbg}
\end{equation}
For $m\in\set{2,\dots,t}$, let
\begin{equation}
p_1:=\bigwedge\set{a_{v}(\vec x):v\in\set{1,\dots,t}}\quad
\text{ and }\quad
p_m:=\bigvee\set{b_{i j}(\vec x):\set{i,j}\in E_G},
\label{eq:dmNrmHnJs}
\end{equation}
$\vec p:=(p_1,\dots,p_t)$, and $\vec u=(u_1,\dots,u_t):=(1,0,\dots,0)$, where\footnote{Note that, to reduce the size of $\vec p$, we could have let $p_3=\dots=p_t:=r_1\vee w_1\vee g_1$ together with $u_3=\dots=u_t=1$.} $0=0_{\Boo n}$ and $1=1_{\Boo n}$. 
 We claim that 
\begin{equation}
\vec p(\vec x)=\vec u\text{ has a solution in }\Boo n^k\text{ if and only if } G\text{ is 3-colorable.}
\label{eq:hdnlVln}
\end{equation}
To see this, assume that $C_1$, \dots, $C_t$ are color sets witnessing that $G$ is 3-colorable. For $v\in\set{1,\dots, t}$, 
let  $r_v:=1\iff r\in C_v$,  $w_v:=1\iff w\in C_v$, and $g_v:=1\iff g\in C_v$.  If a variable is not 1, then let it be 0. Clearly, these assignments yield a solution in $\Boo n^k$ of $\vec p(\vec x)=\vec u$. 
Conversely, assume that $\vec p(\vec x)=\vec u$ has a solution $\vec x'=(r'_1,w'_1,g'_1,\dots, r'_t, w'_t, g'_t)\in\Boo n^k$ for the unknown $\vec x$, and fix an atom $e$ in $\Boo n$.
For each $v\in\set{1,\dots, t}$, 
define $C_v\subseteq \set{r,w,g}$ by the rules $r\in C_v\iff e\leq r'_v$, $w\in C_v\iff e\leq w'_v$, and  $g\in C_v\iff e\leq g'_v$. 
For any $v\in\set{1,\dots,t}$, $p_1(\vec x')=u_1=1$ and \eqref{eq:dmNrmHnJs} give that 
$e\leq 1=p_1(\vec x')\leq  a_v(\vec x')=r'_v\vee w'_v\vee g'_v$. Using the well-known fact that every atoms (and, in fact, any join-irreducible element) in a finite distributive lattice is join-prime, we obtain that at least one of the inequalities 
$e\leq r'_v$, $e\leq w'_v$, and $e\leq g'_v$ holds, whereby $C_v$ is nonempty. For $\set{i,j}\in E_G$, 
$p_2(\vec x')=u_2=0$ and \eqref{eq:dmNrmHnJs} give that $(r'_i\wedge r'_j)\vee (w'_i\wedge w'_j)\vee
(g'_i\wedge g'_j)=0$. 
Hence, $r'_i\wedge r'_j=w'_i\wedge w'_j=
g'_i\wedge g'_j=0$. If, say, we had that $r\in C_i\cap C_j$, then $e\leq r'_i$ and $e\leq r'_j$ would lead to
$e\leq r_i'\wedge r'_j=0$, a contradiction. Hence, $r\notin C_i\cap C_j$, and similarly for the colors $w$ and $g$, showing that $C_i\cap C_j=\emptyset$. So $C_1,\dots, C_t$  witness that $G$ is 3-colorable, and we have shown \eqref{eq:hdnlVln}.

Let  $s_G:=\size G$ and  $s$ stand for the size of $G$  and, complying with the earlier notation,  the size of  the equation in \eqref{eq:hdnlVln}, respectively. 
It is not hard to see that there are  polynomials $\mu$ and $\eta$ not depending on $G$ such that   $\vec p(\vec x)=\vec u$   can be constructed from $G$ in $\eta(\sg)$ steps and  $s\leq \mu(\sg)$. We define an algorithm $\Mlg$ as follows. 
For a graph  $G$ as an input, $\Mlg$ constructs  $\vec p(\vec x)=\vec u$, 
then it calls $\Blg(n,b)$ and, finally, it  outputs the same answer that $\Blg(n,b)$ has given. By \eqref{eq:szKnmRrlSTgF} and  \eqref{eq:hdnlVln},  $\Mlg$ solves the 3-coloring problem.  As  $s=\size{\vec p(\vec x)=\vec u}\leq \mu(\sg)$, $\Mlg$ does so in $\nu(\sg):= \eta(\sg) + g^{(n,b)}(\mu(\sg))$ steps. 
As $\nu$ is a polynomial, we obtain that the 3-coloring problem is in $\clp$. On the other hand, we know from Garey, Johnson, and Stockmeyer \cite{Gareyatal}, see also Dailey \cite[Theorems 3 and 4]{dailey}, that $3$-coloring is an \clnp{}-complete problem. Now that an  \clnp{}-complete problem turned out to be in \clp, it follows that \clnp{} = \clp{}, completing the proof. 
\end{proof}

\begin{remark}\label{rem:szrltJLz}
The proof above has reduced the \clnp-complete 3-coloring problem to problem $\DPr(n,b)$, defined in \eqref{eq:lnRbzkTjl}. Therefore,  $\DPr(n,b)$  is also an \clnp-complete  problem for any $2\leq b\in\Nplu$ and any $n\in\Nplu$. 
\end{remark}

Even more is true since the only property of $\Boo n$ that the proof used is that $e$ is join-prime, which means that $\set{x: x\geq e}$ is a prime filter. 

\begin{remark}\label{rem:szrcskHlr}
In every finite  lattice $L$ that has a prime filter and for each $2\leq b\in\Nplu$, the solvability of systems of $b$ equations with constant-free left sides but constant right sides is an \clnp-complete problem. 
\end{remark}

\section{Warning and perspectives}\label{section:warn}
Sometimes, cryptography  goes after conjectures and experience if no  rigorous mathematical proof is available. For example, we only \emph{believe} that the RSA crypto-system is safe and \clp{} $\neq$ \clnp{}. This can justify that no proof occurs in Sections \ref{sect:ACprtcl} and \ref{section:warn}. However, we have some comments.

\begin{remark}\label{rem:sltRsn} 
Even though it is harder to break protocol \eqref{eq:lrJgfncNpZnkt} than  to solve \eqref{eq:sJbKmlDks}
and we know from  Proposition \ref{prop:NP} that 
\eqref{eq:sJbKmlDks} is a hard problem,  these facts themselves do not guarantee the safety of protocol \eqref{eq:lrJgfncNpZnkt}.
\end{remark}
This is so because of (at least) two reasons. First, the Adversary might break  protocol \eqref{eq:lrJgfncNpZnkt} without solving \eqref{eq:sJbKmlDks}. For example, we know from Proposition \ref{prop:NP} that  \eqref{eq:sJbKmlDks} is hard even with the parameters given in  \eqref{eq:hlMrwzfg} but  \eqref{eq:lrJgfncNpZnkt} can easily be broken in this case. 
Second, a  hard problem and even an \clnp{}-complete problem can have many easy instances (that is, inputs) for which the computation is fast and even an ``average''  instance can be such; see the Introduction in Wang \cite{wang} for details. 

Remark \ref{rem:sltRsn} points out that we have not proved that protocols \eqref{eq:lrJgfncNpZnkt} and \eqref{eq:weakvers} are as safe as we believe. Furthermore, an exact proof would need a well-defined strategy of choosing $\vec p$, and a definition to capture what safety means.   Actually, such a definition exists, see Levin \cite{levin} and see also Wang \cite{wang}, but we do not need its complicated details here. Some ideas about a strategy (in another lattice) are outlined in 
Cz\'edli \cite{czgDEBRauth} but without any proof, and we do not know whether  these ideas can be supported by a proof. This is why we mention the \emph{tiling problem} from  Levin \cite{levin}; see also Wang \cite{wang} as a secondary source. This problem, which we do not define here, includes a probabilistic distribution. Levin \cite{levin} proved that, with respect to this distribution, the average case of the tiling problem is hard in some (sophisticated) sense. 
Similarly to the proof of Proposition \ref{prop:NP}, see also Remark \ref{rem:szrltJLz},  
we could reduce the tiling problem to $\DPr(n,b)$ defined in   \eqref{eq:lnRbzkTjl}  but this would require too much tedious work for a lattice theoretic paper. 
(The \clnp-completeness of $\DPr(n,b)$ implies the existence of such a reduction but we need a concrete one that is sufficiently economic.)
Then we could pick a random $\vec p$ for  \eqref{eq:lrJgfncNpZnkt} so that first we take a random instance $y$ of the tiling  problem and then we let $\vec p$ be the polynomial vector in the ``$\DPr(n,b)$-representative'' of $y$. As $y$ and, thus, the corresponding equation in $\DPr(n,b)$ are hard on average, we can hope
that this $\vec p$ turns $\CPr(n,b)$, a problem easier than \eqref{eq:lrJgfncNpZnkt}, hard. 
However, the details of this plan have not been elaborated yet. In particular, we have not proved that the above-suggested method of choosing $\vec p$ (which is only a part of the $\DPr(n,b)$-representative of $y$)  turns $\CPr(n,b)$ (which is another problem)  hard on average. Furthermore,  
it is not clear whether the parameters suggested in Section \ref{sect:ACprtcl} are large enough for the plan suggested above; enlarging the paramaters reduce the practical value of \eqref{eq:lrJgfncNpZnkt} .

Finally, we note that protocols \eqref{eq:lrJgfncNpZnkt}  and \eqref{eq:weakvers} become more economic if we decrease $k$ so that $\Boo n$ still has very many $k$-dimensional generating vectors; this is the point where Sections \ref{sec:genBoole} and \ref{sect:prrslt} are connected to the  Section \ref{sect:ACprtcl}.

\end{document}